\documentclass[runningheads]{llncs}
\usepackage[T1]{fontenc}
\usepackage{graphicx}
\usepackage{amsmath,amssymb}
\usepackage{color}
\usepackage{tikz}
\usetikzlibrary{arrows.meta, positioning}

\newcommand{\ones}{\underline{1}}
\newcommand{\E}{{\mathbb E}}
\newcommand{\Prob}{{\mathbb P}}

\begin{document}
\title{Equilibrium Joining Strategies for Queues\\ 
in Two-Phase Random Environment\thanks{This paper has been accepted at 
the 29th International Conference on Analytical and Stochastic Modelling 
Techniques and Applications (ASMTA 2026) with proceedings published in 
Springer LNCS.}}
\titlerunning{Equilibrium Joining Strategies for Queues in Random Environment}
% If the paper title is too long for the running head, you can set
% an abbreviated paper title here
%
\author{Konstantin Avrachenkov\inst{1} \and Uri Yechiali\inst{2}}
%
% without ORCID ids:
% \author{Konstantin Avrachenkov\inst{1} \and Uri Yechiali\inst{2}}
% with ORCID ids:
% \author{Konstantin Avrachenkov\inst{1}\orcidID{0000-0002-8124-8272)} \and
% Uri Yechiali\inst{2}\orcidID{0000-0002-6644-411X} }
%
\authorrunning{K. Avrachenkov and U. Yechiali}
% First names are abbreviated in the running head.
% If there are more than two authors, 'et al.' is used.
%
\institute{Inria Sophia Antipolis, France\\
\email{k.avrachenkov@inria.fr} \and
Tel Aviv University, Israel\\
\email{uriy@tauex.tau.ac.il}}
\maketitle              % typeset the header of the contribution
\begin{abstract}
We study equilibrium joining strategies in an M/M/1-type queueing system with strategic customers operating 
in a two-phase random environment described as a continuous-time Markov process. Strategic customers, upon arrival, 
choose whether to join or to balk based on available information and anticipated utility, considering the trade-off 
between reward from service and waiting cost.
Four observation scenarios are analysed: fully observable (both queue length and environment phase are disclosed to 
a customer upon arrival), queue-only observable, environment-only observable, and fully unobservable. In each case, 
equilibrium joining strategies are analysed. In the environment-only observable and fully unobservable cases, explicit 
solutions and equilibrium conditions are derived under rapid oscillations and under very slow transitions between 
environment phases.

\keywords{Strategic Queueing \and Joining Strategies \and Random Environment \and Partial Observations}
\end{abstract}

\section{Introduction}

We consider an M/M/1-type queue in a random environment modeled by a two-phase continuous-time Markov chain.
Specifically, when the environment is in phase $i=1,2$ the arrival rate is $\lambda_i$, the service rate is $\mu_i$, 
and the environment remains in that phase for an exponential time with parameter $\eta_i$, after which it switches to the other phase. 
%Thus, the rate matrix of the underlying environment process is 
%$$
%A = \left[\begin{array}{rr}
%-\eta_1 & \eta_1 \\
%\eta_2 & -\eta_2
%\end{array}\right].
%$$
This queueing system has been extensively analysed (see e.g., \cite{YechialiNaor1971,Purdue1974,Neuts1978,BaccelliMakowski1986,Gupta2006}
and follow-up publications). The focus of our work is on the analysis of equilibrium 
joining strategies of individual customers. 
We assume that all the parameters of the model constitute common knowledge.
We consider various cases of available information regarding the environment phase and the queue length for arriving customers. 
The following four cases will be investigated: (a) fully observable (FO) case, when both the environment phase and the queue length are known upon arrival; (b) the case when only the queue length is observable (QO) upon arrival; (c) the case when only the environment phase
is observable (EO) upon arrival and, finally, (d) no observation (NO) case when neither the environment phase nor the queue length are observable. 

Inspired by the seminal article of Naor \cite{Naor} (see also \cite{Yechiali1971,Yechiali1972,Edelson1975}), we would like 
to analyse equilibrium joining strategies in an evolving environment. Upon arrival, 
a customer can decide whether to join or to balk. If a customer balks, they leave the system forever. If a customer joins, they gain an eventual reward $R$ for the service and accumulate waiting cost at a rate $c_w$ per unit of time.
Customers who join cannot subsequently renege. Reneging may be beneficial when the environment switches from a fast to a slow service phase. Its value in queueing systems 
with server vacations or failures is studied in \cite{Economou2022}.

In \cite{Economou2013} equilibrium joining strategies have been studied
in a system with random environment when the server removes all present
customers at the completion epochs of exponential catastrophe cycles. See also \cite{Paz2007} and \cite{Paz2014}, where related models of $M/M/\infty$ and $M/M/1$, respectively, with disasters have been analysed. In \cite{Economou2016}, strategic queueing has been studied 
for fluid queues in random environment. In \cite{Hassinatal2023}
strategic queueing with arrival process with an i.i.d. random rate has been analysed. 
The special case $\lambda_1=\lambda_2$ and $\mu_1=0$ corresponds to an $M/M/1$ queue 
with an unreliable server. Equilibrium balking in this setting is studied in \cite{EconomouKanta2008}. 
A model related to this special case arises in the dedicated-vs-free spectrum choice in wireless networks, 
as studied in \cite{Jagannathan2012}.
For background on strategic queueing we recommend 
 the books \cite{HassinHaviv,Hassin2016}.

\section{FO case: fully observable system}

We start our analysis with the fully observable (FO) case.
Let $S_i(n)$ denote the sojourn time of a customer joining a queue of length $n$, when the environment is in phase $i=1,2$. 
Let $s_i(n)$ be its expected value. In particular, $S_i(0)$ is the customer's service time given the service starts in 
environment $i=1,2$. 
Using the memoryless property of the exponential distribution, we can write the following
equation:
\begin{equation}
\label{eq:sys4si}
s_i(0) = \frac{1}{\mu_i+\eta_i} + \frac{\eta_i}{\mu_i+\eta_i}\,s_j(0),\quad i\neq j,\quad i,j\in\{1,2\}.
\end{equation}
That is, $1/(\mu_i+\eta_i)$ is the expected time until either the service is concluded or the environment 
changes phase, and then, with probability $\eta_i/(\mu_i+\eta_i)$ the environment switches to phase $j$, 
and the remaining expected service time is $s_j(0)$.
Solving the system (\ref{eq:sys4si}), we obtain
\begin{equation}
\label{eq:si}
s_i(0) = \frac{\mu_j+\eta_1+\eta_2}{\mu_1\mu_2+\mu_1\eta_2+\mu_2\eta_1},
\quad i\neq j,\quad i,j\in\{1,2\}.
\end{equation}

We also define the probability that a service initiated in environment $i$ is completed in environment $j$:
$$
q_{ij} = \mathbb{P}[\text{service completion occurs in environment } j \mid \text{service has started in } i].
$$
Reasoning as in the derivation of (\ref{eq:sys4si}) gives:
$$
q_{ii} = \frac{\mu_i}{\mu_i+\eta_i} + \frac{\eta_i}{\mu_i+\eta_i}\,q_{ji},
\quad i\neq j,\quad i,j\in\{1,2\},
$$
The solution of the above linear system results in
\begin{equation}
\label{eq:qii}
q_{ii} = \frac{\mu_i(\mu_j+\eta_j)}{\mu_1\mu_2+\mu_1\eta_2+\mu_2\eta_1}, \quad i\neq j,\quad i,j\in\{1,2\},
\end{equation}
and we have $q_{ij}=1-q_{ii}$ for $j\neq i$. Equation (\ref{eq:qii}) can be interpreted as follows:
$$
q_{ii}
= \sum_{k=0}^\infty \left( \frac{\eta_1\eta_2}{(\mu_1+\eta_1)(\mu_2+\eta_2)}\right)^k \frac{\mu_i}{\mu_i+\eta_i},\quad i=1,2,
$$
%  = \frac{\frac{\mu_i}{\mu_i+\eta_i}}{1-\frac{\eta_1\eta_2}{(\mu_1+\eta_1)(\mu_2+\eta_2)}}
that is, the underlying process performs $k$ phase transitions back and forth, and then the service 
completion occurs before the phase transition.

%Next, we define the conditional expected sojourn time
%$$
%s_i(n)=\mathbb{E}[\text{sojourn time} \mid \text{arrival sees queue length } n \text{ and environment in phase } i].
%$$
When a customer arrives and sees $n$ customers ahead, the first service (of the customer in service) will take an expected time $s_i(0)$ if the environment is in phase $i$. At the end of this service, the environment is in phase $i$ with probability $q_{ii}$ and in phase $j$ with probability $q_{ij}$ (where $j\neq i$). Hence, the recursion for the conditional expected sojourn time is:
$$
s_i(n)=s_i(0)+q_{ii}\,s_i(n-1)+q_{ij}\,s_j(n-1),\quad i\neq j.
$$
Define the vectors
$$
s(n)=\left[\begin{array}{cc}s_1(n)\\s_2(n)\end{array}\right],\quad n=0,1,...
$$
and the matrix
$$
Q=\left[\begin{array}{cc}
q_{11} & q_{12} \\
q_{21} & q_{22}
\end{array}\right]
=\left[\begin{array}{cc}
q_{11} & 1-q_{11} \\
1-q_{22} & q_{22}
\end{array}\right].
$$
Then, the recurrence becomes
\begin{equation}
\label{eq:Qrecur}
s(n) = s(0) + Q\,s(n-1).
\end{equation}
This recurrence leads to closed-form expressions for $s(n)$.

\begin{proposition}
\label{prop:Qclosed}
The vector of the expected conditional sojourn times has
the following closed-form expressions:
\begin{equation}
\label{eq:sn_1}
s(n) = \sum_{k=0}^{n}Q^k s(0)
\end{equation}
and
\begin{equation}
\label{eq:sn_2}
s(n) = [(n+1) \ones \pi + H(I-Q^{n+1})] s(0),
\end{equation}
where 
$\pi=\left[\frac{\mu_1\eta_2}{\mu_1\eta_2+\mu_2\eta_1} \quad
\frac{\mu_2\eta_1}{\mu_1\eta_2+\mu_2\eta_1}\right]$ 
is the invariant probability measure of $Q$
and $H$ is the deviation matrix.
\end{proposition}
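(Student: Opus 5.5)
The first formula (\ref{eq:sn_1}) follows by induction on $n$ from the recurrence (\ref{eq:Qrecur}). For $n=0$ the sum reduces to $Q^0 s(0)=s(0)$. If $s(n-1)=\sum_{k=0}^{n-1}Q^k s(0)$, then
$$
s(n)=s(0)+Q\sum_{k=0}^{n-1}Q^k s(0)=\sum_{k=0}^{n}Q^k s(0).
$$

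For (\ref{eq:sn_2}) we use the standard Markov chain identities for the deviation matrix. First we check that $Q$ is stochastic and irreducible and that $\pi$ is its invariant measure. The rows sum to one by construction. By (\ref{eq:qii}), $q_{12}=1-q_{11}=\mu_2\eta_1/D$ and $q_{21}=\mu_1\eta_2/D$, where $D=\mu_1\mu_2+\mu_1\eta_2+\mu_2\eta_1$. For a two-state chain, the invariant vector is proportional to $(q_{21},q_{12})=(\mu_1\eta_2,\mu_2\eta_1)/D$. Normalizing gives exactly the stated $\pi$, and irreducibility requires $\eta_1,\eta_2,\mu_1,\mu_2>0$.

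Next, the eigenvalues of $Q$ are $1$ and $\lambda=q_{11}+q_{22}-1=1-q_{12}-q_{21}$. When $q_{12}+q_{21}>0$ and $q_{12}+q_{21}<2$, we have $|\lambda|<1$, so the chain is aperiodic. We then define the deviation matrix
$$
H=\sum_{k\ge 0}(Q^k-\ones\pi)=(I-Q+\ones\pi)^{-1}-\ones\pi,
$$
and use its standard properties: $H\ones=0$, $\pi H=0$, and $(I-Q)H=H(I-Q)=I-\ones\pi$. If the periodic case $\lambda=-1$ should occur (all $q_{ii}=0$), we instead take $H$ defined through the fundamental matrix $(I-Q+\ones\pi)^{-1}-\ones\pi$, which still satisfies the same algebraic identities.

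Using these identities, we telescope the partial sum. Since $\pi Q=\pi$ and $Q\ones=\ones$, we have $Q^k\ones\pi=\ones\pi$. Hence
$$
Q^k-\ones\pi=Q^k(I-\ones\pi)=Q^k(I-Q)H=Q^kH-Q^{k+1}H,
$$
where the middle equality uses $(I-Q)H=I-\ones\pi$. Summing over $k=0,\dots,n$ gives
$$
\sum_{k=0}^nQ^k=(n+1)\ones\pi+(I-Q^{n+1})H.
$$

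The remaining step is to replace $(I-Q^{n+1})H$ by $H(I-Q^{n+1})$, as written in the statement. The matrices $H$ and $Q$ commute, because $H$ is a function of $Q$ (an inverse minus $\ones\pi$, and $\ones\pi$ commutes with $Q$). Therefore $H$ commutes with $Q^{n+1}$, and the two orderings agree. Multiplying by $s(0)$ yields (\ref{eq:sn_2}).

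The main obstacle is only bookkeeping. One must verify the fundamental identity $(I-Q)H=I-\ones\pi$ and the commutation of $H$ with $Q$. Both follow from $(I-Q+\ones\pi)(H+\ones\pi)=I$ together with $\ones\pi Q=Q\ones\pi=\ones\pi$. Alternatively, one can use the explicit two-state spectral form
$$
Q^k=\ones\pi+\lambda^k(I-\ones\pi),\qquad H=\frac{I-\ones\pi}{1-\lambda},
$$
which gives the result by a direct geometric-series computation.
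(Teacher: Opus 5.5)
Your proof is correct and follows essentially the paper's route: you iterate the recurrence for the first formula, then combine the deviation-matrix identity with $\pi Q=\pi$ to rewrite $\sum_{k=0}^{n}Q^k$. The paper premultiplies $(I-Q)\sum_{k=0}^{n}Q^k=I-Q^{n+1}$ by $H$ and uses $H(I-Q)=I-\ones\pi$, which you list yourself, so it gets the ordering $H(I-Q^{n+1})$ directly; your telescoping via $(I-Q)H=I-\ones\pi$ instead needs the extra (valid) commutation step, and your explicit check that the stated $\pi$ is invariant for $Q$ is something the paper leaves unverified.
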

\noindent {\bf Proof:}
By iterating the recurrence (\ref{eq:Qrecur}), we obtain the closed-form expression
(\ref{eq:sn_1}).

The other expression can be obtained using the deviation matrix \cite{KemenySnell1976}
$$
H=(I-\ones \pi)(I-Q+\ones \pi)^{-1},
$$
where $\pi$ is the invariant probability measure of $Q$, i.e., $\pi = \pi Q$ with $\pi \ones = 1$.
Note that $\pi$ is a row vector and $\ones$ is a column vector of 1s.
We can write
$$
(I-Q) \sum_{k=0}^{n} Q^k = I - Q^{n+1}.
$$
Premultiplying the above equality by the matrix $H$ and using the property of the deviation matrix $H(I-Q)=I-\ones \pi$ (see e.g., the Appendix in \cite{KemenySnell1976}), we obtain
$$
(I-\ones \pi) \sum_{k=0}^{n} Q^k = H(I - Q^{n+1}).
$$
Then, using $\pi = \pi Q$ yields
$$
\sum_{k=0}^{n} Q^k = [(n+1) \ones \pi + H(I-Q^{n+1})]. 
$$
Postmultiplication of the above equation by $s(0)$ leads to (\ref{eq:sn_2}).
\hfill $\Box$

\medskip

We can also establish the following monotonicity result.
\begin{lemma}
\label{lem:monotone1}The following inequality holds
\begin{equation}
\label{eq:sojourn-level-monotonicity}
s_i(n+1)>s_i(n), \qquad i\in\{1,2\}.
\end{equation}
\end{lemma}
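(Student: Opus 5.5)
From (\ref{eq:sn_1}) in Proposition~\ref{prop:Qclosed}, the difference of consecutive sojourn-time vectors is a single term:
$$
s(n+1)-s(n) = Q^{n+1} s(0).
$$
So it suffices to show that every component of $Q^{n+1}s(0)$ is strictly positive. Equivalently, we can argue through the recurrence (\ref{eq:Qrecur}) by induction. We have $s(1)-s(0)=Q\,s(0)$, and
$$
s(n+1)-s(n)=Q\,\bigl(s(n)-s(n-1)\bigr),
$$
so the positivity of the difference vector propagates from step to step, because $Q$ is a stochastic matrix with nonnegative entries and rows summing to one.

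The key steps, in order, are as follows.
\begin{enumerate}
\item By (\ref{eq:si}), both components of $s(0)$ are strictly positive. This uses $\mu_1\mu_2+\mu_1\eta_2+\mu_2\eta_1>0$, which holds under the standing assumption that service is possible, e.g.\ $\mu_1,\mu_2\ge 0$ with $\mu_1+\mu_2>0$ and $\eta_1,\eta_2>0$. Even in the special case $\mu_1=0$ mentioned in the introduction, the numerator $\mu_j+\eta_1+\eta_2$ stays positive.
\item $Q$ is a stochastic matrix: its entries $q_{ij}$ are probabilities, and $q_{ij}=1-q_{ii}$.
\item For a stochastic $Q$ and a vector $v$ with $v>0$ componentwise, each component $(Qv)_i=\sum_j q_{ij}v_j$ is a convex combination of the positive numbers $v_1,v_2$. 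Hence $(Qv)_i\ge \min_j v_j>0$.
\item Applying step 3 repeatedly to $v=s(0)$ gives $Q^{n+1}s(0)\ge \min\{s_1(0),s_2(0)\}\,\ones>0$, which yields (\ref{eq:sojourn-level-monotonicity}) for every $n\ge 0$. The bound is in fact uniform in $n$, which is consistent with the linear growth $(n+1)\pi s(0)$ visible in (\ref{eq:sn_2}).
\end{enumerate}

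There is no real obstacle here. The only point needing care is step 1: confirming that $s(0)$ is strictly positive in both components (and finite) for the admissible parameter range, including degenerate cases such as one $\mu_i=0$. I would handle this directly from the closed form (\ref{eq:si}), or probabilistically: $s_i(0)$ is the mean of a nonnegative, nondegenerate service time, so it is positive.

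An alternative probabilistic argument is also available as a sanity check. A customer who finds $n+1$ customers ahead waits for the first of them to finish, which takes a strictly positive expected time. The customer then faces a system with $n$ ahead in a random phase. A coupling along the same environment path then gives $S_i(n+1)\ge S_i(n)$ pathwise, with strict inequality in expectation.
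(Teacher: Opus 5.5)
Your proposal is correct and follows essentially the same route as the paper's proof: both use (\ref{eq:sn_1}) to write $s(n+1)-s(n)=Q^{n+1}s(0)$ and conclude from the stochasticity of $Q$ and the strict positivity of $s(0)$. Your convex-combination bound $Q^{n+1}s(0)\ge \min\{s_1(0),s_2(0)\}\,\ones$ just makes explicit the step the paper leaves implicit, and your positivity check for $s(0)$ via (\ref{eq:si}) also covers the degenerate case $\mu_1=0$.
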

\noindent {\bf Proof:}
From (\ref{eq:sn_1}), we have
$$
s(n+1)-s(n)=Q^{n+1}s(0).
$$
Since $Q$ is stochastic and $s(0)$ has strictly positive components,
the inequality (\ref{eq:sojourn-level-monotonicity}) follows.
\hfill $\Box$

\medskip

When an arriving customer sees the environment in phase $i$ and $n$ customers
in the queue, the customer does not incur a loss and joins the queue if
\begin{equation}
\label{eq:ind_strategy}
R \ge c_w \, s_i(n), \quad i=1,2.
\end{equation}
Recall that $R$ is the reward for the service and $c_w$ is the rate
of the waiting cost.
We assume that for someone who has already arrived at the system, it makes sense 
to join the queue even if the expected utility is zero.
Thus, by Lemma~\ref{lem:monotone1}, the equilibrium strategy has
phase-dependent threshold structure with the thresholds
$$
\bar{n}_i = \min\left\{ n \in {\mathbb N}_0 : s_i(n) > \frac{R}{c_w} \right\}, \quad i=1,2,
$$
i.e., the first queue length at which the customer balks.

\section{QO case: only the queue length is observable}

Now we consider the QO case when the queue length is observable upon arrival but not the phase
of the environment. An arriving customer joins the queue if their expected
utility is non-negative. In the QO case, the expected utility is given by
$$
U_{L=n} = R - c_w \E^0[S|L=n], 
$$
where $S$ is the sojourn time, $L$ is the queue length observed upon arrival, and
$\E^0[\cdot]$ denotes Palm expectation, i.e., the expectation given an arrival event
(for more background on Palm expectation and probability, see e.g. \cite{BaccelliBremaud1987}). 
Then, conditioning on the environment phase $E$, we can write
$$
U_{L=n} =
$$
$$
R - c_w (\E^0[S|E=1,L=n]\Prob^0[E=1|L=n]+\E^0[S|E=2,L=n]\Prob^0[E=2|L=n])
$$
where the operations with zero superscript correspond
to Palm versions. Specifically, the Palm probabilities are given by
\begin{equation}
\label{eq:P0_in}
\Prob^0[E=i,L=n] = \frac{\lambda_i p_{in}}{\lambda_1p_{1\bullet}+\lambda_2p_{2\bullet}},
\quad i=1,2, \quad n=0,1,... 
\end{equation}
where $p_{in}=\Prob[E=i,L=n]$ are the stationary probabilities and
$p_{i\bullet} = \sum_{n=0}^{\infty} p_{in} = \eta_j/(\eta_1+\eta_2)$, $i=1,2,$ $j\neq i$.
Hence,
$$
\Prob^0[E=i|L=n] = \frac{\lambda_i p_{in}}{\lambda_1p_{1n}+\lambda_2p_{2n}},
\quad i=1,2, \quad n=0,1,... 
$$
Thus, the expected utility can be written as follows:
\begin{equation}
\label{eq:U_QOcase}
U_{L=n} = R - c_w \frac{1}{\lambda_1 p_{1n}+\lambda_2 p_{2n}}( \lambda_1 p_{1n} s_1(n)
+ \lambda_2 p_{2n} s_2(n) ).
\end{equation}

In general, a symmetric equilibrium in the QO case can have the form 
of an infinite vector of joining probabilities, i.e., $a^*=(a_0^*,a_1^*,...)$.
Next, we show that under natural conditions the equilibrium strategy
has a threshold structure.

\begin{proposition}
\label{prop:QO-monotonicity}
Suppose that $\mu_1\leq \mu_2$ (without loss of generality). 
Fix a stationary joining strategy
$a=(a_n)_{n\geq 0}$, and let $p^a_{in}$ denote the corresponding 
stationary probabilities. 
Assume that
\begin{equation}
\label{eq:MLR-strategy-a}
    p^a_{1,n+1}p^a_{2n}
    \geq
    p^a_{1n}p^a_{2,n+1}
\end{equation}
whenever both $n$ and $n+1$ have positive Palm probability. Then,
the utility $U_n^a$ to join when seeing the queue length $n$ and everyone
else follows strategy $a$ is strictly decreasing in $n$ on the support of the Palm distribution.
\end{proposition}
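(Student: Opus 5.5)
The plan is to write the conditional expected sojourn time as a convex combination of $s_1(n)$ and $s_2(n)$ and to show three things. The Palm weight on phase $1$ is nondecreasing in $n$. Phase $1$ is uniformly the ``slower'' phase, i.e.\ $s_1(n)\ge s_2(n)$ for all $n$. Both $s_i(n)$ are strictly increasing in $n$. Note that $s_i(n)$ depends only on the service mechanism and the environment, not on the strategy $a$; the strategy enters only through $p^a_{in}$. From (\ref{eq:U_QOcase}),
$$
U^a_n = R - c_w\bigl(w_n s_1(n) + (1-w_n)s_2(n)\bigr),\qquad
w_n=\frac{\lambda_1 p^a_{1n}}{\lambda_1 p^a_{1n}+\lambda_2 p^a_{2n}}.
$$

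\textbf{Step 1 (monotone weights).} The map $x\mapsto \lambda_1 x/(\lambda_1 x+\lambda_2)$ is increasing, so $w_n\le w_{n+1}$ is equivalent to the likelihood-ratio condition $p^a_{1,n+1}p^a_{2n}\ge p^a_{1n}p^a_{2,n+1}$. Cross-multiplying makes this equivalence hold directly, with no division. The degenerate cases, in which one of the $p^a_{in}$ vanishes, need a brief check. They are harmless because both $n$ and $n+1$ are assumed to carry positive Palm mass, so both denominators are positive. Hence (\ref{eq:MLR-strategy-a}) gives $w_{n+1}\ge w_n$.

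\textbf{Step 2 (phase ordering of sojourn times).} By (\ref{eq:si}), with $D=\mu_1\mu_2+\mu_1\eta_2+\mu_2\eta_1$, we have $s_1(0)-s_2(0)=(\mu_2-\mu_1)/D\ge 0$. For the general $n$ I use (\ref{eq:sn_1}). For a $2\times 2$ stochastic matrix, the difference of the two rows of $Q^k$ equals $\delta^k(1,-1)$, where $\delta=q_{11}-q_{21}=q_{11}+q_{22}-1$. This follows by induction, since $(1,-1)Q=\delta(1,-1)$. Hence
$$
s_1(n)-s_2(n)=\bigl(s_1(0)-s_2(0)\bigr)\sum_{k=0}^n \delta^k .
$$
From (\ref{eq:qii}), $q_{11}+q_{22}=(2\mu_1\mu_2+\mu_1\eta_2+\mu_2\eta_1)/D$, so $\delta=\mu_1\mu_2/D\in[0,1)$. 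The sum is therefore positive and $s_1(n)\ge s_2(n)$ for all $n$. I expect this step to be the main obstacle: a priori $\delta$ could be negative, which would let the sign of $s_1(n)-s_2(n)$ oscillate. The explicit formula for $\delta$ settles this.

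\textbf{Step 3 (conclusion).} Combining the steps gives
$$
w_{n+1}s_1(n+1)+(1-w_{n+1})s_2(n+1)\ \ge\ w_{n}s_1(n+1)+(1-w_{n})s_2(n+1)\ >\ w_{n}s_1(n)+(1-w_{n})s_2(n).
$$
The first inequality uses Step 1 together with $s_1(n+1)\ge s_2(n+1)$ from Step 2. The second, strict, inequality uses Lemma~\ref{lem:monotone1} componentwise together with $w_n\in[0,1]$. Thus $U^a_{n+1}<U^a_n$ whenever $n$ and $n+1$ both lie in the support of the Palm distribution. For $\mu_1=\mu_2$, Step 2 gives equality, and strictness still comes from the lemma.
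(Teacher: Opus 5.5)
Your proof is correct and follows the paper's argument essentially step for step. It uses the same cross-multiplication to get monotone Palm posteriors, and the same contraction identity with $\delta=q_{11}+q_{22}-1=\mu_1\mu_2/D$ to obtain $s_1(n)-s_2(n)=(s_1(0)-s_2(0))\sum_{k=0}^n\delta^k$. It then closes with the same telescoping plus Lemma~\ref{lem:monotone1}; the only cosmetic difference is that you pivot on the old weight $w_n$ and the new difference $s_1(n+1)-s_2(n+1)$, whereas the paper pivots on $\alpha^a_{n+1}$ and $s_1(n)-s_2(n)$. Incidentally, the sign issue you flag in Step~2 is not a real danger: since $Q$ is stochastic, $\delta\in[-1,1]$, so the partial sums $\sum_{k=0}^n\delta^k$ are nonnegative anyway.
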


\noindent {\bf Proof:}
First note that from (\ref{eq:si}) we have
\begin{equation}
\label{eq:s0diff}
    s_1(0)-s_2(0) =
    \frac{\mu_2-\mu_1}
    {\mu_1\mu_2+\mu_1\eta_2+\mu_2\eta_1}
    \geq 0.
\end{equation}
Also note that from (\ref{eq:qii}) for every $x=(x_1,x_2)^\top$,
\begin{equation}
\label{eq:Qxdiff}
    (Qx)_1-(Qx)_2 = \delta(x_1-x_2),
\end{equation}
where
$$
    \delta :=
    q_{11}+q_{22}-1
    =
    \frac{\mu_1\mu_2}
    {\mu_1\mu_2+\mu_1\eta_2+\mu_2\eta_1}
    \in(0,1).
$$
It follows from (\ref{eq:sn_1}), (\ref{eq:s0diff}) and (\ref{eq:Qxdiff}) that
\begin{equation}
\label{eq:sojourn-phase-order}
    s_1(n)-s_2(n) =
    \bigl(s_1(0)-s_2(0)\bigr)
    \sum_{k=0}^n\delta^k
    \geq 0.
\end{equation}

For every $n$ of positive Palm probability, set
$$
\alpha_n^a :=
    \mathbb{P}_a^0(E=1 | L=n)
    =
    \frac{\lambda_1p^a_{1n}}
    {\lambda_1p^a_{1n}+\lambda_2p^a_{2n}}.
$$
Condition (\ref{eq:MLR-strategy-a}) implies
\begin{equation}
\label{eq:posterior-monotonicity}
    \alpha_{n+1}^a\geq \alpha_n^a.
\end{equation}
Indeed, after cross-multiplication, (\ref{eq:posterior-monotonicity})
is equivalent to
$$
    \lambda_1\lambda_2
    \left(
        p^a_{1,n+1}p^a_{2n}
        -p^a_{1n}p^a_{2,n+1}
    \right)
    \geq 0.
$$
For notational convenience, denote
$$
    \sigma_a(n):=
    \mathbb{E}_a^0[S | L=n]
    =\alpha_n^a s_1(n)
    +(1-\alpha_n^a)s_2(n).
$$
Then, we can write
\begin{align*}
    \sigma_a(n+1)-\sigma_a(n)
    ={}&
    \alpha_{n+1}^a
    \bigl(s_1(n+1)-s_1(n)\bigr)
    \\
    &+
    (1-\alpha_{n+1}^a)
    \bigl(s_2(n+1)-s_2(n)\bigr)
    \\
    &+
    (\alpha_{n+1}^a-\alpha_n^a)
    \bigl(s_1(n)-s_2(n)\bigr).
\end{align*}
The first two terms are strictly positive by Lemma~\ref{lem:monotone1}, whereas the last term is
nonnegative by \eqref{eq:sojourn-phase-order} and
\eqref{eq:posterior-monotonicity}. Thus
$$
    \sigma_a(n+1) > \sigma_a(n).
$$
Since $U_n^a=R-c_w \sigma_a(n)$, the sequence $U_n^a$ is strictly
decreasing. \hfill $\Box$

\medskip 

If a stationary strategy $a$ induces a stationary distribution 
satisfying (\ref{eq:MLR-strategy-a}), then its joining utility is decreasing in $n$. 
In particular, if an equilibrium strategy $a^*$ induces a distribution satisfying (\ref{eq:MLR-strategy-a}), then, on the support of the Palm distribution, $a^*$ has a threshold form. Yet, there may be non-threshold equilibrium strategies.

The condition (\ref{eq:MLR-strategy-a}) has a probabilistic interpretation in terms of the 
likelihood ratio stochastic order \cite{MullerStoyan2002}: 
$L \mid E=1 \ge_{lr} L \mid E=2$. This means that the likelihood 
ratio
$$ 
\frac{{\mathbb P}(L=n | E=1)}{{\mathbb P}(L=n | E=2)} 
$$
is nondecreasing in $n$. Hence, observing a larger queue provides increasingly stronger evidence that the system is in the slower phase~1.

We have an explicit expression for $s_i(n)$, equation (\ref{eq:sn_2}). However, 
it is unfortunate that there is no explicit expression for the stationary probability $p_{in}$.
Therefore, as in \cite{YechialiNaor1971}, we consider two limiting cases:
the case of rapid oscillations of the environment (Case C in \cite{YechialiNaor1971})
and the case of very slow transitions between the environment phases 
(Case D in \cite{YechialiNaor1971}). In these cases, the stationary distribution
has an explicit product-form.

%Based on \cite{Yechiali1971,Yechiali1972} and \cite{Economou2013}, we conjecture in this study that equilibrium
%strategies in the QO case are of threshold type. We leave a thorough investigation of this conjecture 
%as a future research.

\subsection{QO case: rapid oscillations of the environment}

Here, as in \cite{YechialiNaor1971}, we assume that the environment oscillates rapidly between 
two phases 1 and 2. Specifically, $\eta_1$ and $\eta_2$ tend to infinity simultaneously and their
ratio $\eta_1/\eta_2$ tends to a finite constant $C$. In the limit, as the rates
go to infinity, the stationary probabilities converge to
\begin{equation}
\label{eq:stationaryQOrapid}
p_{in} = p_{i\bullet} \frac{1-\bar{\rho}}{1-\bar{\rho}^{\bar{n}+1}} \bar{\rho}^n,
\quad n=0,...,\bar{n},
\end{equation}
where $\bar{n}$ is the equilibrium threshold and $\bar{\rho} = \bar{\lambda}/\bar{\mu}$ with
$$
\bar{\lambda} = p_{1\bullet} \lambda_1 + p_{2\bullet}  \lambda_2,
\quad 
\bar{\mu} = p_{1\bullet} \mu_1 + p_{2\bullet} \mu_2,
$$
and $p_{1\bullet} = 1/(1+C)$, $p_{2\bullet} = C/(1+C)$.
The product-form (\ref{eq:stationaryQOrapid}) can be formally
justified in the framework of singularly perturbed Markov chains \cite{Altmanetal2004,Avrachenkovetal2013}.

Using (\ref{eq:si}), we conclude that
$$
s_i(0) \to \frac{1+C}{\mu_1+C\mu_2}=\frac{1}{\bar{\mu}}, \quad i=1,2,
$$
as $\eta_1,\eta_2 \to \infty$. And hence,
\begin{equation}
\label{eq:snrapid}
s(n)=\sum_{k=0}^{n} Q^k s(0) = \sum_{k=0}^{n} Q^k 
\left[\begin{array}{l}1\\1\end{array}\right] \frac{1}{\bar{\mu}}
= \frac{(n+1)}{\bar{\mu}} \left[\begin{array}{l}1\\1\end{array}\right].
\end{equation}
Substituting (\ref{eq:stationaryQOrapid}) and (\ref{eq:snrapid}) into (\ref{eq:U_QOcase}),
yields
$$
U_{L=n} = R - c_w \frac{n+1}{\bar{\mu}},
$$
which means that this case corresponds to the Naor's classical model with the averaged service
rate $\bar{\mu}$.

\subsection{QO case: very slow transitions between the environment phases}

Now, let us consider the opposite situation when the transition rates between the phases of the
environment are very slow. Technically, this means to let the rates $\eta_1$ and $\eta_2$ go to
zero assuming that their ratio $\eta_1/\eta_2$ goes to a finite constant $C$. 

In this case, as $\eta_1$ and $\eta_2$ go to zero,  
the stationary distribution converges to the product-form (the justification again follows from the
theory of singularly perturbed Markov chains \cite{Altmanetal2004,Avrachenkovetal2013}), i.e.,
$$
p_{in}(a) \propto p_{i\bullet} \rho_i^n \prod_{k=0}^{n-1}a_k.
$$
so $p^a_{1,n+1}p^a_{2n}-p^a_{1n}p^a_{2,n+1}$ has the sign of $\rho_1-\rho_2$.
Thus, in this limiting case, Proposition~\ref{prop:QO-monotonicity} gives the primitive sufficient condition $(\mu_1-\mu_2)(\rho_1-\rho_2)\le 0$ for equilibrium threshold structure. Thus, the threshold structure is guaranteed when the slower service environment has the larger traffic intensity. In this case, the informational and congestion effects act 
in the same direction.

Specifically, using \cite{Altmanetal2004}, for a threshold strategy $\bar{n}$ we can write
\begin{equation}
\label{eq:stationaryQOslow}
p_{in} = p_{i\bullet} \frac{1-\rho_i}{1-\rho_i^{\bar{n}+1}} \rho_i^n,
\quad n=0,...,\bar{n},
\end{equation}
where $\rho_i=\lambda_i/\mu_i$, $i=1,2$.

From the expression (\ref{eq:si}), we conclude that
$$
s_i(0) \to \frac{1}{\mu_i}, \quad i=1,2,
$$
as $\eta_1,\eta_2 \to 0$. From (\ref{eq:qii}), we conclude that
$Q \to I$ (identity matrix), as $\eta_1,\eta_2 \to 0$. Hence, using (\ref{eq:sn_1}),
\begin{equation}
\label{eq:snslow}
s(n)=\sum_{k=0}^n Q^k s(0)=
(n+1)\left[\begin{array}{l}1/\mu_1\\1/\mu_2\end{array}\right].
\end{equation}
Thus, by (\ref{eq:U_QOcase}), the utility against
everyone following a threshold policy is
$$
U_{L=n}^{(\bar{n})} = 
R - c_w (n+1) \left(\frac{\alpha^{\bar{n}}_n}{\mu_1}
+\frac{1-\alpha^{\bar{n}}_n}{\mu_2}\right),
$$
where $\alpha^{(\bar{n})}_n = \mathbb{P}_{\bar{n}}^0(E=1 | L=n)$ is
the conditional Palm probability when everyone follows the threshold
strategy $\bar{n}$.
Thus, the threshold strategy $\bar{n}$ is an equilibrium when
$$
\bar{n} \left(\frac{\alpha^{(\bar{n})}_{\bar{n}-1}}{\mu_1}
+\frac{1-\alpha^{(\bar{n})}_{\bar{n}-1}}{\mu_2}\right) \le \frac{R}{c_w},
$$
and
$$
(\bar{n}+1) \left(\frac{\alpha^{(\bar{n})}_{\bar{n}}}{\mu_1}
+\frac{1-\alpha^{(\bar{n})}_{\bar{n}}}{\mu_2}\right) > \frac{R}{c_w}.
$$

If $\mu_1=\mu_2$, then we again retrieve Naor's model. Another interesting particular case 
is when $\rho_1=\rho_2$ and we have
$$
U_{L=n} = 
R - c_w\frac{\lambda_1 p_{1\bullet}/\mu_1 + \lambda_2 p_{2\bullet}/\mu_2}{\lambda_1 p_{1\bullet}
+\lambda_2 p_{2\bullet}} (n+1) \ge 0,
$$ 
which corresponds to the Naor's model with a modified mean service time, where the expectation
is conditioned on an arrival event.

\section{EO case: only the environment is observable}
\label{sec:EOcase}

Now, we consider the EO case when an arriving customer observes the current phase of the environment
but does not see the queue length. Then, the customers' equilibrium strategy can be described by two probabilities
$a_1$ and $a_2$. It is a randomized strategy \cite{Edelson1975,HassinHaviv}. A customer decides to join 
the queue with probability $a_i$, if they observe the environment in phase $i=1,2$. Under this strategy, the dynamics
of the system is a random walk in random environment (for the state transitions see Fig.~\ref{fig:trans_diag1}).

\begin{figure}[ht]
\centering
\begin{tikzpicture}[->, >=Stealth, node distance=2cm]

% States for Environment 1 (upper chain)
\node[circle, draw] (s0) {0};
\node[circle, draw, right of=s0] (s1) {1};
\node[circle, draw, right of=s1] (s2) {2};
\node[circle, draw, right of=s2] (s3) {3};
\node[right of=s3] (dots) {$\cdots$};

% Birth-death arrows for Env 1
\draw[->] (s0) to[bend left=20] node[above] {$a_1\lambda_1$} (s1);
\draw[->] (s1) to[bend left=20] node[below] {$\mu_1$} (s0);
\draw[->] (s1) to[bend left=20] node[above] {$a_1\lambda_1$} (s2);
\draw[->] (s2) to[bend left=20] node[below] {$\mu_1$} (s1);
\draw[->] (s2) to[bend left=20] node[above] {$a_1\lambda_1$} (s3);
\draw[->] (s3) to[bend left=20] node[below] {$\mu_1$} (s2);
\draw[->] (s3) to[bend left=20] node[above] {$a_1\lambda_1$} (dots);
\draw[->] (dots) to[bend left=20] node[below] {$\mu_1$} (s3);

% States for Environment 2 (lower chain)
\foreach \i/\name in {0/s0, 1/s1, 2/s2, 3/s3} {
  \node[circle, draw, below=2.0cm of \name] (s\i b) {\i};
}
\node[right of=s3b] (dotsb) {$\cdots$};

% Birth-death arrows for Env 2
\draw[->] (s0b) to[bend left=20] node[above] {$a_2\lambda_2$} (s1b);
\draw[->] (s1b) to[bend left=20] node[below] {$\mu_2$} (s0b);
\draw[->] (s1b) to[bend left=20] node[above] {$a_2\lambda_2$} (s2b);
\draw[->] (s2b) to[bend left=20] node[below] {$\mu_2$} (s1b);
\draw[->] (s2b) to[bend left=20] node[above] {$a_2\lambda_2$} (s3b);
\draw[->] (s3b) to[bend left=20] node[below] {$\mu_2$} (s2b);
\draw[->] (s3b) to[bend left=20] node[above] {$a_2\lambda_2$} (dotsb);
\draw[->] (dotsb) to[bend left=20] node[below] {$\mu_2$} (s3b);

% Switching arrows between environments
\foreach \i in {0,1,2,3} {
  % From Env 1 to Env 2 at rate eta_1 (bend left)
  \draw[->] (s\i) to[bend left=20] node[right, pos=0.5] {$\eta_1$} (s\i b);
  % From Env 2 to Env 1 at rate eta_2 (bend left)
  \draw[->] (s\i b) to[bend left=20] node[left, pos=0.5] {$\eta_2$} (s\i);
}

% Labels
\node[left of=s0, xshift=0.2cm] {Env. 1};
\node[left of=s0b, xshift=0.2cm] {Env. 2};

\end{tikzpicture}
\caption{Transition diagram when only the environment is observable.}
\label{fig:trans_diag1}
\end{figure}

The expected utility can be written in the following form:
$$
U_{E=i} = R - c_w \sum_{n=0}^{\infty} \E^0[S|L=n,E=i] \Prob^0[L=n|E=i]
$$
\begin{equation}
\label{ExpUtilEO}
= R -c_w \sum_{n=0}^\infty s_i(n) \frac{p^0_{in}}{p^0_{i\bullet}}
= R -c_w \sum_{n=0}^\infty s_i(n) \frac{p_{in}}{p_{i\bullet}},
\quad i=1,2.
\end{equation}
We observe that, in this case, the factors involving the arrival rates cancel
in (\ref{ExpUtilEO}), so that we can use the stationary probabilities rather than their Palm counterparts. This is consistent with the literature \cite{VanDoornRegterschot1988,MelamedWhitt1990}, 
where it was established that the PASTA property holds conditionally on the environment. 

The equilibrium probabilities $a_i^*, i=1,2,$ are solutions of the best-response
conditions
\begin{equation}
\label{eq:equilibrium}
\left\{\begin{array}{ll}
U_{E=i}(a_1^*,a_2^*) \le 0, & a_i^*=0,\\
U_{E=i}(a_1^*,a_2^*) = 0, & 0<a_i^*<1,\\
U_{E=i}(a_1^*,a_2^*) \ge 0, & a_i^*=1.
\end{array}\right.
\end{equation}

%The equilibrium probabilities $a_i, i=1,2,$ are solutions of the inequalities
%\begin{equation}
%\label{eq:equilibrium}
%\left\{\begin{array}{l}
%U_{E=1}(a_1,a_2) \ge 0,\\
%U_{E=2}(a_1,a_2) \ge 0.
%\end{array}\right.
%\end{equation}
Again, we have an explicit expression for $s_i(n)$ but not for the
stationary probabilities $p_{in}$. Therefore, as in the QO case, we consider 
two limiting cases: the case of rapid oscillations of the environment and
the case of very slow transitions between the environment phases. In these 
cases, the stationary distribution has an explicit product-form.

\subsection{EO case: rapid oscillations of the environment}

Here we assume that the environment oscillates rapidly between two phases 1 and 2.
Specifically, $\eta_1$ and $\eta_2$ tend to infinity simultaneously and their
ratio $\eta_1/\eta_2$ tends to a finite constant $C$. In the limit, as the rates
go to infinity, the stationary probabilities converge to
\begin{equation}
\label{eq:stationaryC}
p_{in} = p_{i\bullet} (1-\bar{\lambda}/\bar{\mu}) (\bar{\lambda}/\bar{\mu})^n,
\end{equation}
where
$$
\bar{\lambda} = p_{1\bullet} a_1 \lambda_1 + p_{2\bullet} a_2 \lambda_2
\quad
\bar{\mu} = p_{1\bullet} \mu_1 + p_{2\bullet} \mu_2,
$$
and
$p_{1\bullet} = 1/(1+C)$, $p_{2\bullet} = C/(1+C)$.
In addition to the justification in \cite{YechialiNaor1971}, this can also be
justified in the framework of singularly perturbed Markov chains 
\cite{Altmanetal2004,Avrachenkovetal2013}.

Using (\ref{eq:snrapid}), we conclude that for $i=1,2$
$$
U_{E=i}(a_1,a_2) = 
R - \frac{c_w}{\bar{\mu}} \left(1-\frac{\bar{\lambda}}{\bar{\mu}}\right) 
\sum_{n=0}^\infty (n+1) \left(\frac{\bar{\lambda}}{\bar{\mu}}\right)^n
= R - \frac{c_w}{\bar{\mu}-\bar{\lambda}}
$$
and the utility is independent of the phase when a customer arrives.
Intuitively, this follows from the fact that the environment oscillates
very rapidly. The system as a whole resembles an M/M/1 queue with arrival rate 
$\bar{\lambda}$, service rate $\bar{\mu}$, whose mean sojourn time is
$1/(\bar{\mu}-\bar{\lambda})$.

Define $\lambda_{max}=p_{1\bullet} \lambda_1 + p_{2\bullet} \lambda_2$.
Similarly to Chapter 3 of \cite{HassinHaviv}, we can classify the equilibrium
strategies into three cases:
\begin{enumerate}
\item If $\lambda_{max} \le \bar{\mu} - c_w/R$, then $a_1^*=a_2^*=1$;
\item If $0 \le \bar{\mu}-c_w/R < \bar{\lambda}_{max}$, 
choose $a_1^*,a_2^*$ such that 
\begin{equation}
\label{eq:multi_equil}
p_{1\bullet} \lambda_1 a_1 + p_{2\bullet} \lambda_2 a_2 = \bar{\mu}-c_w/R;
\end{equation}
\item If $\bar{\mu}-c_w/R < 0$ (i.e., $R<c_w/\bar{\mu}$), then $a_1^*=a_2^*=0$.
\end{enumerate}
We note that there is freedom in the choice of $a_1^*$ and $a_2^*$ in case 2.
One natural option is to choose the same $a^*=(\bar{\mu}-c_w/R)/\bar{\lambda}_{max}$ for
the two phases of the environment.

\subsection{EO case: very slow transitions between the environment phases}

Now, let us consider the opposite situation when the transition rates between the phases of the
environment are very slow. Technically, this means to let the rates $\eta_1$ and $\eta_2$ go to
zero assuming that their ratio $\eta_1/\eta_2$ goes to a finite constant $C$. 

In this case, as $\eta_1$ and $\eta_2$ go to zero,  
the stationary distribution converges to the product-form \cite{YechialiNaor1971}:
\begin{equation}
\label{eq:stationaryD}
p_{in} = p_{i\bullet} (1-a_i\lambda_i/\mu_i) (a_i\lambda_i/\mu_i)^n, \quad i=1,2.
\end{equation}
Due to strategic customers, we can assume that $a_i \lambda_i < \mu_i$, $i=1,2$,
which implies that the system is stable in equilibrium.
(The justification of the product form can be provided using the theory of singularly perturbed 
Markov chains \cite{Altmanetal2004,Avrachenkovetal2013}.)

Combining (\ref{ExpUtilEO}) with (\ref{eq:snslow}) and (\ref{eq:stationaryD}),
we obtain
$$
U_{E=i} = R - \frac{c_w}{\mu_i-a_i\lambda_i}.
$$ 
Thus, in the case of very slow transitions between the environment phases,
we have a combination of two models from \cite{Edelson1975}, and the 
equilibrium strategies are given by
\begin{equation}
\label{eq:slow_limit}
a_i^*=\left[\frac{\mu_i-c_w/R}{\lambda_i}\right]_{[0,1]}, \quad i=1,2,
\end{equation}
where $[x]_{[0,1]}:=\min\{1,\max\{0,x\}\}$.

%This leads to the following classification of the equilibrium strategies:
%\begin{enumerate}
%\item If $\lambda_i \le \mu_i - c_w/R$, $i=1,2$, then $a_1^*=a_2^*=1$;
%\item If  $\lambda_i \le \mu_i - c_w/R$ and $0 \le \mu_j-c_w/R < \lambda_j$, $j\neq i$,
%then $a_i^*=1$ and $a_j^*=(\mu_j-c_w/R)/\lambda_j$;
%\item If $0 \le \mu_i-c_w/R < \lambda_i$, $i=1,2$, then
%$a_i^*=(\mu_i-c_w/R)/\lambda_i$, $i=1,2$;
%\item If $\mu_i-c_w/R < 0$ and $0 \le \mu_j-c_w/R < \lambda_j$, $j\neq i$,
%then $a_i^*=0$ and $a_j^*=(\mu_j-c_w/R)/\lambda_j$;
%\item If $\mu_i-c_w/R < 0$, $i=1,2$, then $a_1^*=a_2^*=0$.
%\end{enumerate}
%In fact, in the case of very slow transitions between the environment phases,
%we have a combination of two models from \cite{Edelson1975}. 

\section{NO case: fully unobservable system}

Finally, let us consider the NO case when no information is available to an arriving customer except
the knowledge of the system parameters. In this case, the equilibrium strategy is again randomized \cite{HassinHaviv},
given by the joining probability $a$. Under this strategy, the dynamics
of the system is again a random walk in random environment (for the state transitions see Fig.~\ref{fig:trans_diag2}).

\begin{figure}[ht]
\centering
\begin{tikzpicture}[->, >=Stealth, node distance=2cm]

% States for Environment 1 (upper chain)
\node[circle, draw] (s0) {0};
\node[circle, draw, right of=s0] (s1) {1};
\node[circle, draw, right of=s1] (s2) {2};
\node[circle, draw, right of=s2] (s3) {3};
\node[right of=s3] (dots) {$\cdots$};

% Birth-death arrows for Env 1
\draw[->] (s0) to[bend left=20] node[above] {$a\lambda_1$} (s1);
\draw[->] (s1) to[bend left=20] node[below] {$\mu_1$} (s0);
\draw[->] (s1) to[bend left=20] node[above] {$a\lambda_1$} (s2);
\draw[->] (s2) to[bend left=20] node[below] {$\mu_1$} (s1);
\draw[->] (s2) to[bend left=20] node[above] {$a\lambda_1$} (s3);
\draw[->] (s3) to[bend left=20] node[below] {$\mu_1$} (s2);
\draw[->] (s3) to[bend left=20] node[above] {$a\lambda_1$} (dots);
\draw[->] (dots) to[bend left=20] node[below] {$\mu_1$} (s3);

% States for Environment 2 (lower chain)
\foreach \i/\name in {0/s0, 1/s1, 2/s2, 3/s3} {
  \node[circle, draw, below=2.0cm of \name] (s\i b) {\i};
}
\node[right of=s3b] (dotsb) {$\cdots$};

% Birth-death arrows for Env 2
\draw[->] (s0b) to[bend left=20] node[above] {$a\lambda_2$} (s1b);
\draw[->] (s1b) to[bend left=20] node[below] {$\mu_2$} (s0b);
\draw[->] (s1b) to[bend left=20] node[above] {$a\lambda_2$} (s2b);
\draw[->] (s2b) to[bend left=20] node[below] {$\mu_2$} (s1b);
\draw[->] (s2b) to[bend left=20] node[above] {$a\lambda_2$} (s3b);
\draw[->] (s3b) to[bend left=20] node[below] {$\mu_2$} (s2b);
\draw[->] (s3b) to[bend left=20] node[above] {$a\lambda_2$} (dotsb);
\draw[->] (dotsb) to[bend left=20] node[below] {$\mu_2$} (s3b);

% Switching arrows between environments
\foreach \i in {0,1,2,3} {
  % From Env 1 to Env 2 at rate eta_1 (bend left)
  \draw[->] (s\i) to[bend left=20] node[right, pos=0.5] {$\eta_1$} (s\i b);
  % From Env 2 to Env 1 at rate eta_2 (bend left)
  \draw[->] (s\i b) to[bend left=20] node[left, pos=0.5] {$\eta_2$} (s\i);
}

% Labels
\node[left of=s0, xshift=0.2cm] {Env. 1};
\node[left of=s0b, xshift=0.2cm] {Env. 2};

\end{tikzpicture}
\caption{Transition diagram for the fully unobservable case.}
\label{fig:trans_diag2}
\end{figure}

The expected utility is given by
\begin{equation}
\label{eq:ExpUtilNO}
U = R - c_w \sum_{n=0}^{\infty} \sum_{i=1,2} \E^0[S|E=i,L=n] \Prob^0[E=i,L=n],
\end{equation}
where the Palm distribution is given by
\begin{equation}
\label{eq:PalmDistr}
\Prob^0[E=i,L=n] = p^0_{in} = \frac{\lambda_i p_{in}}{\lambda_1p_{1\bullet}+\lambda_2p_{2\bullet}}.
\end{equation}
Thus, we have
\begin{equation}
\label{ExpUtilNOa}
U = R - c_w \left(\sum_{n=0}^\infty s_1(n) \frac{\lambda_1p_{1n}}{\lambda_1p_{1\bullet}+\lambda_2p_{2\bullet}} 
+ \sum_{n=0}^\infty s_2(n) \frac{\lambda_2p_{2n}}{\lambda_1p_{1\bullet}+\lambda_2p_{2\bullet}} \right).
\end{equation}
Since in general we do not have an explicit expression for the stationary
distribution, we proceed to the analysis of the two limiting cases. 

\subsection{NO case: rapid oscillations of the environment}

As in the EO scenario, in the limit as the rates
go to infinity, the stationary probabilities converge to \cite{YechialiNaor1971}
\begin{equation}
\label{eq:stationaryCNO}
p_{in} = p_{i\bullet} (1-\bar{\lambda}/\bar{\mu}) (\bar{\lambda}/\bar{\mu})^n,
\end{equation}
where
$$
\bar{\lambda} = a (p_{1\bullet} \lambda_1 + p_{2\bullet} \lambda_2)
\quad \mbox{and} \quad
\bar{\mu} = p_{1\bullet} \mu_1 + p_{2\bullet} \mu_2,
$$
$$
p_{1\bullet} = \frac{1}{1+C} \quad \mbox{and} \quad p_{2\bullet} = \frac{C}{1+C}.
$$
Combining (\ref{ExpUtilNOa}) with (\ref{eq:stationaryCNO}) and (\ref{eq:snrapid}), 
we obtain
$$
U = R -\frac{c_w}{\bar{\mu}-\bar{\lambda}}.
$$
Hence this case is equivalent to the model \cite{Edelson1975} with
the arrival and service rates averaged with respect to the rapidly
oscillating environment. Consequently, we have the following classification
of the equilibrium strategies:
\begin{enumerate}
\item If $\bar{\lambda}_{max} \le \bar{\mu} - c_w/R$, then $a^*=1$;
\item If $0 \le \bar{\mu}-c_w/R < \bar{\lambda}_{max}$, 
then $a^*=(\bar{\mu}-c_w/R)/\bar{\lambda}_{max}$;
\item If $\bar{\mu}-c_w/R < 0$ (i.e., $R<c_w/\bar{\mu}$), then $a^*=0$;
\end{enumerate}
where $\bar{\lambda}_{max}=p_{1\bullet} \lambda_1 + p_{2\bullet} \lambda_2
=\frac{\lambda_1 + C \lambda_2}{1+C}$.

\subsection{NO case: very slow transitions between the environment phases}

As in the EO case (see Section~4.2), in the limit as the rates
go to zero, the stationary probabilities converge to \cite{YechialiNaor1971}
\begin{equation}
\label{eq:stationaryDNO}
p_{in} = p_{i\bullet} (1-a\lambda_i/\mu_i) (a\lambda_i/\mu_i)^n, \quad i=1,2.
\end{equation}
%with
%$$
%p_{1\bullet} = \frac{1}{1+C} \quad \mbox{and} \quad p_{2\bullet} = \frac{C}{1+C}.
%$$
Combining (\ref{ExpUtilNOa}) with (\ref{eq:stationaryDNO}), (\ref{eq:PalmDistr}) and (\ref{eq:snslow}),
we can write
\begin{equation}
\label{ExpUtilNO1}
U = R - c_w\left(\frac{p^0_{1\bullet}}{\mu_1-a\lambda_1}+\frac{p^0_{2\bullet}}{\mu_2-a\lambda_2}\right)
= R - c_w \left(\frac{p^0_{1\bullet}/\mu_1}{1-a\rho_1}+\frac{p^0_{2\bullet}/\mu_2}{1-a\rho_2}\right),
\end{equation}
with $\rho_i=\lambda_i/\mu_i, i=1,2$. Without loss of generality, assume that $\rho_1 < \rho_2$.
Then, it is easy to see that the function
$$
f(a)=\left(\frac{p^0_{1\bullet}/\mu_1}{1-a\rho_1}+\frac{p^0_{2\bullet}/\mu_2}{1-a\rho_2}\right)
$$
has two vertical asymptotes at $a=1/\rho_2$ and $a=1/\rho_1$ and, except at these two points, it has positive
derivative. We need to consider only the interval $[0,1/\rho_2)$.
Note also that
$$
f(0)=\frac{p^0_{1\bullet}}{\mu_1}+\frac{p^0_{2\bullet}}{\mu_2}.
$$
Thus, we have the following classification of the equilibrium strategies:
\begin{enumerate}
\item If $\frac{R}{c_w} \le \frac{p^0_{1\bullet}}{\mu_1}+\frac{p^0_{2\bullet}}{\mu_2}$,
then $a^*=0$;
\item If $\rho_2 < 1$, there are two sub-cases:
\begin{enumerate}
\item If $\frac{R}{c_w} > \frac{p^0_{1\bullet}/\mu_1}{1-\rho_1}+\frac{p^0_{2\bullet}/\mu_2}{1-\rho_2}$,
then $a^*=1$;
\item If $\frac{R}{c_w} \le \frac{p^0_{1\bullet}/\mu_1}{1-\rho_1}+\frac{p^0_{2\bullet}/\mu_2}{1-\rho_2}$,
then 
\begin{equation}
\label{eq:astarNO}
a^*=\frac{(\rho_1+\rho_2)R/c_w-\rho_2p^0_{1\bullet}/\mu_1-\rho_1p^0_{2\bullet}/\mu_2-\sqrt{D}}{2\rho_1\rho_2R/c_w},
\end{equation}
where
$$
D=\left((\rho_1+\rho_2)\frac{R}{c_w}-\frac{\rho_2p^0_{1\bullet}}{\mu_1}-\frac{\rho_1p^0_{2\bullet}}{\mu_2}\right)^2
-4\rho_1\rho_2\frac{R}{c_w}\left(\frac{R}{c_w}-\frac{p^0_{1\bullet}}{\mu_1}-\frac{p^0_{2\bullet}}{\mu_2}\right).
$$
\end{enumerate}
\item If $\rho_2 \ge 1$ and $\frac{R}{c_w} > \frac{p^0_{1\bullet}}{\mu_1}+\frac{p^0_{2\bullet}}{\mu_2}$, 
then $a^*$ is given by (\ref{eq:astarNO}).
\end{enumerate}
It is interesting to observe that in comparison with the EO case with slow transitions between 
the environment phases, in the NO case we do not have a straightforward modification of the basic model in \cite{Edelson1975}.  

\section{Numerical example}

We further investigate numerically the EO case described in
Section~\ref{sec:EOcase}. As noted there, for finite values of
$\eta_1$ and $\eta_2$, we do not have a closed-form expression for the
stationary distribution. However, for fixed joining probabilities
$a_1$ and $a_2$, the process is a quasi-birth-and-death (QBD) process
with $2\times 2$ matrix blocks describing transitions between and within
levels. Thus, the stationary distribution has the matrix-geometric form
$$
p_{n}=p_{0} {\cal R}^n, \quad n=1,2,...
$$
where $p_{n}=[p_{1n} \ p_{2n}]$ and ${\cal R} \in {\mathbb R}^{2\times 2}$  
is the minimal nonnegative solution of a second-order polynomial matrix 
equation \cite{Neuts1978,Neuts1981}. 

Let us choose the following set of parameters: $\lambda_1=0.8$,
$\lambda_2=1.2$, $\mu_1=1.0$, $\mu_2=1.4$ and $c_w=1$, $R=5/3$.
We take $\eta_1=C\eta_2$ with $C=0.75$ and vary $\eta_2$.
With this value of $C$, the environment marginal stationary
probabilities are
$p_{1\bullet} = 1/(1+C)=4/7$,
$p_{2\bullet} = C/(1+C)=3/7$.
By (\ref{eq:slow_limit}), the slow-transition limit is
$$
[a_1^*, a_2^*] \xrightarrow[\eta_2 \to 0]{}
\left[\frac{1}{2},\frac{2}{3}\right].
$$
We plot the equilibrium joining probabilities $a_1^*$ and $a_2^*$
as functions of $\eta_2$ in Figure~\ref{fig1}. The dashed lines there
correspond to the slow-transition limit. 

In the fast-transition limit, as $\eta_1,\eta_2\to\infty$, any pair
$(a_1^*,a_2^*)$ satisfying~(\ref{eq:multi_equil}) constitutes an
equilibrium. In the present numerical example, this equation takes the
form
$$
8a_1^* + 9a_2^* = 10,
$$
which is satisfied to good accuracy by $a_1^*(100)=0.395$ and
$a_2^*(100)=0.759$. Note that, for every finite $\eta_2$, the 
two phase-dependent utilities
determine a particular pair $(a_1^*,a_2^*)$, whereas in the
rapid-oscillation limit the two utilities coincide and the limiting
model admits a continuum of equilibria. The numerical continuation
appears to select a point near $(0.395,0.759)$ from this continuum.

\begin{figure}
\centering
\includegraphics[width=0.8\textwidth]{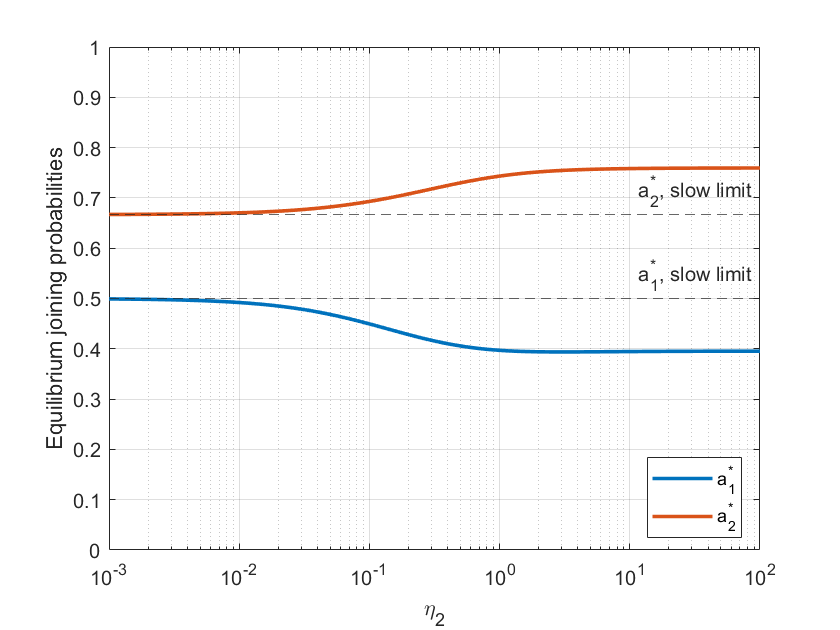}
\caption{EO case: Equilibrium joining probabilities $a_1^*$ and $a_2^*$
as functions of $\eta_2$. The dashed lines correspond to the limits
of the joining probabilities as $\eta_2 \to 0$.} 
\label{fig1}
\end{figure}

\section{Conclusion}

We studied equilibrium joining strategies in an M/M/1-type queue operating in a two-phase random environment under four information scenarios: fully observable (FO), queue-length observable (QO), environment observable (EO), and no observation (NO). We derived equilibrium characterizations in each case and obtained explicit results in the limiting regimes of rapid and very slow oscillations of the environment. 
Several interesting questions remain open. For example, we would like to understand equilibrium selection in singular limits, for example whether the equilibrium selected by continuation from finite transition rates can be characterized analytically or justified by another selection principle. We would also like 
to investigate monotonicity results for the EO case.

\begin{credits}
\subsubsection{\ackname}
We would like to thank Jake Clarkson and Refael Hassin for their helpful remarks.  
U. Yechiali is supported by the Israel Science Foundation, Grant No.1968/23.

%\subsubsection{\discintname}
%The authors have no competing interests to declare that are
%relevant to the content of this article.
\end{credits}


\begin{thebibliography}{99}

\bibitem{Altmanetal2004}
Altman, E., Avrachenkov, K.E., Núnez-Queija, R.:
Perturbation analysis for denumerable Markov chains with application to queueing models.
Advances in Applied Probability \textbf{36}(3), 839--853 (2004)

\bibitem{Avrachenkovetal2013}
Avrachenkov, K.E., Filar, J.A., Howlett, P.G.:
\emph{Analytic Perturbation Theory and its Applications}.
SIAM (2013)

\bibitem{BaccelliMakowski1986}
Baccelli, F., Makowski, A.M.:
Stability and bounds for single server queues in random environment.
Stochastic Models \textbf{2}(2), 281--291 (1986)

\bibitem{BaccelliBremaud1987}
Baccelli, F., Br\'emaud, P.:
\emph{Palm Probabilities and Stationary Queues}.
Springer (1987)

\bibitem{EconomouKanta2008}
Economou, A., Kanta, S.:
Equilibrium balking strategies in the observable single-server queue with breakdowns and repairs.
Operations Research Letters \textbf{36}(6), 696--699 (2008)

\bibitem{Economou2013}
Economou, A., Manou, A.:
Equilibrium balking strategies for a clearing queueing system in alternating environment.
Annals of Operations Research \textbf{208}, 489--514 (2013)

\bibitem{Economou2016}
Economou, A., Manou, A.:
Strategic behavior in an observable fluid queue with an alternating service process.
European Journal of Operational Research \textbf{254}(1), 148--160 (2016)

\bibitem{Economou2022}
Economou, A., Logothetis, D., Manou, A.:
The value of reneging for strategic customers in queueing systems with server vacations/failures.
European Journal of Operational Research \textbf{299}(3), 960--976 (2022)

\bibitem{Edelson1975}
Edelson, N.M., Hilderbrand, D.K.:
Congestion tolls for Poisson queuing processes.
Econometrica \textbf{43}, 81--92 (1975)

\bibitem{Gupta2006}
Gupta, V., Harchol-Balter, M., Wolf, A.S., Yechiali, U.:
Fundamental characteristics of queues with fluctuating load.
In: Proceedings of ACM SIGMETRICS, pp. 203--215 (2006)

\bibitem{HassinHaviv}
Hassin, R., Haviv, M.:
\emph{To Queue or Not to Queue: Equilibrium Behavior in Queueing Systems}.
Springer (2003)

\bibitem{Hassin2016}
Hassin, R.:
\emph{Rational Queueing}.
CRC Press (2016)

\bibitem{Hassinatal2023}
Hassin, R., Haviv, M., Oz, B.:
Strategic behavior in queues with arrival rate uncertainty.
European Journal of Operational Research \textbf{309}(1), 217--224 (2023)

\bibitem{Jagannathan2012}
Jagannathan, K., Menache, I., Modiano, E., Zussman, G.:
Non-cooperative spectrum access: The dedicated vs.\ free spectrum choice.
IEEE Journal on Selected Areas in Communications \textbf{30}(11), 2251--2261 (2012)

\bibitem{KemenySnell1976}
Kemeny, J.G., Snell, J.L.:
\emph{Finite Markov Chains}.
2nd edn. Springer (1976)

\bibitem{MelamedWhitt1990}
Melamed, B., Whitt, W.:
On arrivals that see time averages: A martingale approach.
Journal of Applied Probability \textbf{27}(2), 376--384 (1990)

\bibitem{MullerStoyan2002}
M\"uller, A., Stoyan, D.:
\emph{Comparison Methods for Stochastic Models and Risks}.
Wiley (2002)

\bibitem{Naor}
Naor, P.:
The regulation of queue size by levying tolls.
Econometrica \textbf{37}(1), 15--24 (1969)

\bibitem{Neuts1978}
Neuts, M.F.:
The M/M/1 queue with randomly varying arrival and service rates.
Opsearch \textbf{15}, 139--157 (1978)

\bibitem{Neuts1981}
Neuts, M.F.:
\emph{Matrix-Geometric Solutions in Stochastic Models: An Algorithmic Approach}.
Johns Hopkins University Press (1981)

\bibitem{Paz2007}
Paz, N., Yechiali, U.:
A note on the $M/M/\infty$ queue in random environment.
Technical report (2007).
\url{https://www.math.tau.ac.il/~uriy/Publications.html}

\bibitem{Paz2014}
Paz, N., Yechiali, U.:
An M/M/1 queue in random environment with disasters.
Asia-Pacific Journal of Operational Research \textbf{31}(3), 1450016 (2014)

\bibitem{Purdue1974}
Purdue, P.:
The M/M/1 queue in a Markovian environment.
Operations Research \textbf{22}(3), 562--569 (1974)

\bibitem{VanDoornRegterschot1988}
Van Doorn, E.A., Regterschot, G.J.K.:
Conditional PASTA.
Operations Research Letters \textbf{7}(5), 229--232 (1988)

\bibitem{YechialiNaor1971}
Yechiali, U., Naor, P.:
Queuing problems with heterogeneous arrivals and service.
Operations Research \textbf{19}(3), 722--734 (1971)

\bibitem{Yechiali1971}
Yechiali, U.:
On optimal balking rules and toll charges in the GI/M/1 queuing process.
Operations Research \textbf{19}(2), 349--370 (1971)

\bibitem{Yechiali1972}
Yechiali, U.:
Customers' optimal joining rules for the GI/M/s queue.
Management Science \textbf{18}(7), 434--443 (1972)

\end{thebibliography}
\end{document}